\newtheorem{definition}{Definition}
\newtheorem{lemma}{Lemma}
\newtheorem{theorem}{Theorem}
\newtheorem*{assumption*}{Assmuption}
\newcommand{\hot}{\ensuremath{{h.o.t.}}}
\title[Equisingularity and isotopy]{A short proof that equisingular branches are isotopic}
\author{P. Fortuny Ayuso}
\date{March 6, 2017}
\begin{document}
\begin{abstract}
  We present a short proof of the fact that two irreducible germs of plane analytic curves are isotopic if they are equisingular, without recourse to the structure of the associated knots.
\end{abstract}
\maketitle
The classical proof given by Brauner \cite{Brauner}, which can be made simpler with arguments by K{\"a}hler \cite{Kahler} (see, for example \cite{Wall} for a modern approach) that two irreducible equisingular curves are topologically equivalent uses the structure of the link associated to the singularity and requires a somewhat long reasoning. We expect the following proof simplifies the argument and makes the result easier to understand. Unfortunately, we do not know (yet?) how to simplify the proof of the reciprocal statement.

The notion of equisingularity we use is the following:
\begin{definition}
  Two germs of analytic branches in $(\mathbb{C}^2,0)$ are equisingular if the dual graphs of their desingularisations are equal.
\end{definition}

We shall prove:
\begin{theorem}\label{the:equisingular-implies-homeomorphic}
  If two irreducible germs of analytic curves are equisingular then there exists an isotopy $\left\{ \psi_t \right\}_{t\in[0,1]}$ of $(\mathbb{C}^2,0)$ with $\psi_0=\mathrm{Id}$, $\psi_1(\Gamma_1) = \Gamma_2$.
\end{theorem}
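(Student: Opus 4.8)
The plan is to pass to the Puiseux parametrisation and reduce everything to a normal form. First I would invoke the classical fact that the dual graph of the desingularisation of a branch is equivalent data to its Puiseux characteristic exponents $(\beta_0;\beta_1,\dots,\beta_g)$, where $\beta_0=n$ is the multiplicity; thus the hypothesis says that $\Gamma_1$ and $\Gamma_2$ have \emph{the same} characteristic exponents. Writing each branch in Puiseux form $\gamma(s)=\bigl(s^{n},\ \sum_{j\ge n}a_j s^{j}\bigr)$, it then suffices to exhibit, for an arbitrary branch with these exponents, an ambient isotopy of $(\mathbb{C}^2,0)$ carrying it to a fixed normal form $\gamma_{\mathrm{nf}}(s)=\bigl(s^{n},\ \sum_{k=1}^{g}s^{\beta_k}\bigr)$ that depends only on the characteristic data. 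Running one such isotopy forward for $\Gamma_1$ and the other backward for $\Gamma_2$ and concatenating yields the desired $\{\psi_t\}$ with $\psi_1(\Gamma_1)=\Gamma_2$.

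The isotopy to the normal form I would build by deforming the Puiseux coefficients in stages, classifying the exponents $j$ appearing in the support into three kinds. For $j$ with $n\mid j$, the term $a_j s^{j}$ is removed by the genuine biholomorphism $(x,y)\mapsto\bigl(x,\ y-c\,x^{j/n}\bigr)$ of $(\mathbb{C}^2,0)$; letting $c$ travel from $a_j$ to $0$ along a path gives an honest ambient isotopy. The characteristic coefficients $a_{\beta_k}$ I would normalise to $1$ using diagonal weighted scalings $(x,y)\mapsto(\lambda^{n}x,\lambda^{w}y)$ composed with reparametrisations $s\mapsto\lambda s$, which are manifestly ambient isotopies for $\lambda$ in a compact subset of $\mathbb{C}^{*}$. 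The remaining \emph{intermediate} terms (neither divisible by $n$ nor characteristic) must be scaled to zero by $a_j\mapsto(1-\tau)a_j$. Throughout, I would check the purely combinatorial point that the deforming family $\gamma_\tau$ keeps the same support of characteristic exponents: since the $a_{\beta_k}$ stay nonzero and the intermediate $j$ never create or destroy a drop of the running gcd, the characteristic exponents, and hence the dual graph, are constant along the whole deformation.

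The main obstacle, where the real work lies, is upgrading the one-parameter family of parametrised curves $\{\Gamma_\tau\}$ produced in the second stage into an \emph{ambient} isotopy of $(\mathbb{C}^2,0)$ fixing the origin. Since $\gamma_\tau$ is injective for small $\abs{s}$, its velocity $\partial_\tau\gamma_\tau(s)$ is a well-defined vector field along $\Gamma_\tau$, and the plan is to extend it to a time-dependent field $V_\tau$ on a neighbourhood of $0$ and integrate it to the sought $\psi_\tau$. The delicate points are (i) constructing the extension so that it vanishes at $0$ at a controlled rate, which guarantees completeness of the flow for all $\tau\in[0,1]$ and that every $\psi_\tau$ fixes $0$ and preserves a fixed neighbourhood, and (ii) ensuring the extension remains compatible with the branch near the singular point, where the parametrisation $s\mapsto\gamma_\tau(s)$ degenerates and ordinary tubular-neighbourhood constructions fail. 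Establishing the necessary estimates near the origin — not any combinatorics of the dual graph, which has already been absorbed into the choice of exponents — is the heart of the argument.
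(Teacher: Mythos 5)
Your reduction to a Puiseux normal form is fine as far as it goes (modulo a small repair: diagonal scalings give only two free parameters, so for $g\geq 2$ you cannot normalise all characteristic coefficients $a_{\beta_k}$ to $1$ by weighted scalings alone; you would have to drag them along paths in $\mathbb{C}^{*}$, which merely adds more terms to the deformation). But the proposal stops exactly where the theorem begins. What you have produced by the end of stage two is only a path, in the space of branches, joining $\Gamma_1$ to $\Gamma_2$ through equisingular curves; the assertion that such a path can be realised by an \emph{ambient} isotopy of $(\mathbb{C}^2,0)$ is precisely the statement that equisingular families are topologically trivial, and that is the whole content of the theorem, not a final technical step one may defer. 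Your proposed mechanism also meets a concrete obstruction: while deforming an intermediate coefficient $a_j$, the velocity $\partial_\tau\gamma_\tau$ restricted to the curve has order $j$ in $s$, and $j$ lies outside the value semigroup of the branch (it is neither divisible by $n$ nor a semigroup element generated by the characteristic data), so this field does \emph{not} extend to a holomorphic vector field on any neighbourhood of the origin. Any extension is at best smooth off $0$ and Lipschitz near it, and proving that such an extension exists requires controlling the prescribed values on the $n$ sheets of the curve, which accumulate onto one another at rates $\abs{s}^{\beta_k}$ as $s\to 0$, exactly where tubular neighbourhoods fail. That is the territory of Whitney stratifications and the Thom--Mather isotopy lemmas (or Verdier's rugose vector fields, or Zariski's equisingularity theory); none of it is routine, and none of it is carried out in your argument.

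For contrast, the paper never confronts the singular point with a vector field at all. It inducts on the resolution: both branches are pulled up to a surface where the relevant strict transforms are \emph{smooth} and transverse to the exceptional divisor, so the only isotopies ever constructed are between regular curves (Lemmas \ref{lem:non-singular-equivalent} and \ref{lem:move-tangent}), given by explicit holomorphic vector fields glued with a partition of unity; since these flows keep the components of the exceptional divisor invariant, the blow-down map converts each upstairs isotopy into an ambient isotopy of $(\mathbb{C}^2,0)$ for free. All of the delicate behaviour at the origin is absorbed by $\pi$ rather than by analytic estimates. To salvage your route you would either have to invoke topological triviality of equisingular families as a black box (making the theorem a corollary of a harder result) or carry out the Lipschitz extension and flow argument in full detail, which would be substantially longer than the proof you were asked to compare against.
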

Before proceeding with the actual proof, we need two elementary results. Consider $(\mathbb{C}^2,0)$ with coordinates $(x,y)$.
\begin{lemma}\label{lem:non-singular-equivalent}
  Given two regular germs of analytic curves $\Gamma_1, \Gamma_2$ at $(\mathbb{C}^2,0)$ both transverse to both $(y=0)$ and $(x=0)$, there is an analytic vector field $X$ in $(\mathbb{C}^2,0)$ such that the associated isotopy $\left\{ \phi_t \right\}_{t\in[0,1]}$ sends $\Gamma_1$ to $\Gamma_2$ and keeps both $(y=0)$ and $(x=0)$ invariant for all $t\in[0,1]$. 
\end{lemma}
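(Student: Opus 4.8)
The plan is to write down an explicit analytic vector field of the special form $X = w(x)\,y\,\pt{y}$ and to check that a suitable choice of the coefficient $w$ does the job. A field of this shape is tangent to both axes for free, so the entire argument reduces to choosing $w$ correctly and verifying that it is analytic at the origin.

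First I would use the transversality hypotheses to normalise the two curves. Since each $\Gamma_i$ is a regular germ whose tangent line at $0$ is neither $(x=0)$ nor $(y=0)$, it is the graph of an analytic function, $\Gamma_i = \{\, y = f_i(x)\,\}$ with $f_i(0)=0$ and $\lambda_i := f_i'(0) \neq 0$: transversality to $(x=0)$ is what lets us solve for $y$, and transversality to $(y=0)$ is what gives $\lambda_i\neq0$. Writing $f_i(x) = x\,u_i(x)$, the factor $u_i$ is then a unit with $u_i(0)=\lambda_i$.

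Next I would compute the flow of $X=w(x)\,y\,\pt{y}$ and read off the equation for $w$. Because the $x$-component vanishes identically and the $y$-component vanishes on $(y=0)$, the flow $\phi_t(x,y)=(x,\ y\,e^{t\,w(x)})$ keeps both axes invariant for all $t$, as required. It sends the point $(x,f_1(x))$ to $(x,\ f_1(x)\,e^{t\,w(x)})$, so $\phi_1(\Gamma_1)=\Gamma_2$ precisely when $e^{w(x)} = f_2(x)/f_1(x)$; equivalently, the whole family $\phi_t(\Gamma_1)$ sweeps out the graphs of $f_1\,(f_2/f_1)^t$, interpolating analytically from $\Gamma_1$ to $\Gamma_2$.

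The main point — and the only place the hypotheses are genuinely needed — is that this $w$ exists as an analytic germ. Here $f_2/f_1 = u_2/u_1$ extends analytically across $x=0$ with value $\lambda_2/\lambda_1\neq0$, so it is a nonvanishing unit on a small disc about $0$; being nonvanishing on a simply connected neighbourhood, it admits an analytic logarithm, and I set $w=\log(f_2/f_1)$ for any branch. Then $X=w(x)\,y\,\pt{y}$ is analytic near $0$ and its associated isotopy $\{\phi_t\}$ has all the asserted properties, which finishes the proof. I do not expect any serious obstacle beyond this: the construction is autonomous and completely explicit, and the one thing to guard against — a vanishing of $f_2/f_1$ that would obstruct the logarithm — is exactly ruled out by the transversality of both curves to $(y=0)$.
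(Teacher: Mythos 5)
Your proof is correct and is essentially the paper's own argument: the paper likewise takes the vector field $\log\left(\frac{s(x)}{x}\right)y\,\pt{y}$ (after first normalising $\Gamma_1$ to $y=x$ by a coordinate change, which you avoid by keeping a general $f_1$), and rests on the same key observation that the ratio of the two graphs is a nonvanishing unit near $0$, hence admits an analytic logarithm. The only difference is bookkeeping: the paper glues its field with the zero field via a partition of unity to get a globally defined flow, while you work with germs and the explicit flow formula, which equally yields the isotopy for $t\in[0,1]$ on a sufficiently small neighbourhood.
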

\begin{proof}
  After an analytic change of coordinates, we may assume without loss of generality, that $\Gamma_1\equiv y=x$ and $\Gamma_2 \equiv (y=s(x)=cx + \hot)$ with $c\neq 0$. Consider an open neighbourhood $U$ of $(0,0)$ and in $U_1\subset \overline{U_1}\subset U$, the vector field
  \begin{equation*}
    X = \left(0, \log\left(\frac{s(x)}{x}\right)y\right),
  \end{equation*}
  (for some determination of the logarithm), and the zero vector field on the complement of a ball included in $U_1$. The glued vector field by means of a partition of unity gives the result.
\end{proof}

The second lemma deals with modifying the tangent cone of an irreducible singularity.

\begin{lemma}\label{lem:move-tangent}
  Let $\Gamma$ be a (singular or not) germ of analytic curve at $(\mathbb{C}^2,0)$ and let $(x,y)$ be local coordinates such that $\Gamma$ is not tangent to either $(x=0)$ or $(y=0)$. Given any line $L\equiv y=ax$ with $a\neq 0$, there is a holomorphic vector field in $(\mathbb{C}^2,0)$ such that the associated isotopy $\left\{ \phi_t \right\}_{t\in[0,1]}$ makes $\phi_1(\Gamma)$ tangent to $L$ and keeps both $(x=0)$ and $(y=0)$ invariant for all $t\in[0,1]$.
\end{lemma}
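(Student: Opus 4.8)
The plan is to use a \emph{linear diagonal} vector field, whose flow acts on tangent directions at the origin in a completely transparent way while automatically preserving the two coordinate axes. First I would pin down the tangent data: under the stated transversality hypotheses the tangent line of $\Gamma$ at the origin is $y=\alpha x$ for a unique $\alpha\in\mathbb{C}^{*}$ (nonzero because $\Gamma$ is not tangent to $(y=0)$, finite because $\Gamma$ is not tangent to $(x=0)$). Next I would record the invariance conditions: a field $X=(P,Q)$ generates a flow keeping $(x=0)$ invariant exactly when $P$ vanishes on $(x=0)$, and keeping $(y=0)$ invariant exactly when $Q$ vanishes on $(y=0)$. The cleanest field meeting both is $X=(0,\mu y)$ for a constant $\mu$, whose first component is identically zero and whose second vanishes on $(y=0)$.

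The flow of this field is the globally defined linear map $\phi_t(x,y)=(x,e^{\mu t}y)$. Being linear, $\phi_t$ carries the tangent cone of $\Gamma$ onto the tangent cone of $\phi_t(\Gamma)$; concretely the line $y=\alpha x$ is sent to $y=\alpha e^{\mu t}x$. Choosing $\mu=\log(a/\alpha)$ for any fixed determination of the logarithm (possible since $a\neq 0$ forces $a/\alpha\in\mathbb{C}^{*}$) yields at $t=1$ the tangent line $y=ax=L$, as required, while $\phi_0=\mathrm{Id}$ and both axes stay invariant throughout. More generally any diagonal field $X=(\lambda x,\mu y)$ with $\mu-\lambda=\log(a/\alpha)$ does the job, since then the slope is multiplied by $e^{(\mu-\lambda)t}$; I would present the choice $\lambda=0$ for brevity.

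The only genuine care is to convert this globally linear flow into a bona fide isotopy supported near the origin and equal to the identity near the boundary of a neighbourhood, which one arranges exactly as in Lemma~\ref{lem:non-singular-equivalent}: multiply $X$ by a bump function equal to $1$ on a small ball about $0$. Since a representative of the germ $\Gamma$ near $0$, together with its whole forward trajectory for $t\in[0,1]$, stays inside such a ball (the linear flow dilates the $y$-coordinate by at most $e^{\abs{\mu}}$), the cut-off field agrees with $X$ along the relevant trajectories, so the tangent-cone computation is untouched. The \textbf{main obstacle}, such as it is, lies precisely in this bookkeeping that the cut-off does not disturb the action on the germ at the origin; the tangent-direction computation itself is immediate from linearity, which is what makes the lemma short.
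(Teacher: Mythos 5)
Your proof is correct, but it is packaged differently from the paper's, and the comparison is instructive. The paper disposes of this lemma in one line: apply Lemma \ref{lem:non-singular-equivalent} to the pair of lines $\Gamma_1=L$ and $\Gamma_2=$ the tangent line of $\Gamma$, the point being that a diffeomorphism fixing the origin and carrying the tangent line of $\Gamma$ onto $L$ automatically carries $\Gamma$ onto a curve tangent to $L$, since diffeomorphisms preserve tangency. You instead construct the vector field directly; note that your field $X=(0,\mu y)$ with $\mu=\log(a/\alpha)$ is exactly what the proof of Lemma \ref{lem:non-singular-equivalent} produces when both curves are lines (there $s(x)=cx$, so $\log(s(x)/x)=\log c$ is constant), so the two arguments share the same engine. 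Yours replaces the appeal to diffeomorphism-invariance of tangency by an explicit computation of how the linear flow scales the slope, which is self-contained and transparent. One remark on what you call the main obstacle: for your field the cut-off is not needed at all. $X=(0,\mu y)$ is an entire vector field whose flow $\phi_t(x,y)=(x,e^{\mu t}y)$ is complete, so the isotopy exists for all $t\in[0,1]$ without any truncation; moreover, since the lemma asks for a \emph{holomorphic} vector field, multiplying by a bump function is actually counterproductive, as the truncated field is only smooth (the same minor blemish occurs in the paper's Lemma \ref{lem:non-singular-equivalent}, where the glued field is no longer analytic, only its germ at the origin is). Dropping the cut-off would make your proof both shorter and closer to the letter of the statement.
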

\begin{proof}
  This is an easy consequence of Lemma \ref{lem:non-singular-equivalent} taking $\Gamma_1 =L$ and $\Gamma_2$ the line tangent to $\Gamma$.
\end{proof}

We can now proceed to prove Theorem \ref{the:equisingular-implies-homeomorphic}:

\begin{proof}[Proof of Theorem \ref{the:equisingular-implies-homeomorphic}]
  Let $\Gamma_1$ and $\Gamma_2$ be such germs. Let $r$ be the length of the desingularisation $\pi:\mathcal{X}\rightarrow (\mathbb{C}^2,0) $ of $\Gamma_1$. We proceed by induction on $r-k$, where $k$ is the cardinality of the set of centres in $\pi$ which belong to $\Gamma_2$.

  If $r-k=0$ then $\Gamma_1$ and $\Gamma_2$ have the same desingularisation. Let $P=\overline{\Gamma_1}\cap \overline{\Gamma_2}$ be the intersection of the strict transforms of $\Gamma_1$ and $\Gamma_2$, which is a point where both strict transforms are non-singular and cross the exceptional divisor at a corner, both in directions transverse to both components of the exceptional divisor. Let $X$ be the vector field near $P$ given by Lemma \ref{lem:non-singular-equivalent}. Glue $X$ with the zero vector field in $\mathcal{X}$ away from a sufficiently small neighbourhood of $P$. If $\phi_t$ is the associated diffeomorphism for $t\in[0,1]$, then the family $\left\{ \pi \circ \phi_t \right\}_{t\in[0,1]}$ gives the desired isotopy of $(\mathbb{C}^2,0)$ between the branches.

  Assume the result true for $r-k=i$ and let $r-k=i+1$. Let $Q$ be the last common centre in the desingularisations of $\Gamma_1$ and $\Gamma_2$. As the curves are equisingular, the only possibility is that their strict transforms $\overline{\Gamma_1}$ and $\overline{\Gamma_2}$ at $Q$ are neither of them tangent to any component of the exceptional divisor at $Q$ and that their tangent cones at $Q$ are different. By means of Lemma \ref{lem:move-tangent}, construct a vector field near $Q$ whose associated isotopy makes $\phi_1(\overline{\Gamma_2})$ tangent to $\overline{\Gamma_1}$. Gluing this with the zero vector field away from a sufficiently small neighbourhood of $Q$ and projecting to $(\mathbb{C}^2,0)$, we get an isotopy between $\Gamma_2$ and a branch for which $r-k\leq i$, which finishes the argument.
\end{proof}


\begin{thebibliography}{1}

\bibitem{Brauner}
K.~Brauner.
\newblock Zur {G}eometrie der {F}unktionen zweier komplexen {V}eränderlichen:
  {II-IV}.
\newblock {\em Abh. Math. Sem. Hamburg}, (6):1--54, 1928.

\bibitem{Kahler}
E.~K{\"a}hler.
\newblock Verzweigung einer algebraischen funktion zweier ver{\"a}nderlichen in
  der umgebung einer singul{\"a}ren stelle.
\newblock {\em Math. Zeits.}, (30):188--206, 1929.

\bibitem{Wall}
C.T.C. Wall.
\newblock {\em Singular Poins of Plane Curves}.
\newblock Cambridge Univ. Press, 2009.
\newblock doi: https://doi.org/10.1017/CBO9780511617560.

\end{thebibliography}

\end{document}